\newcommand{\black}{\textcolor{black}}
\newcommand{\E}{\mathscr E}
\newcommand{\D}{\mathscr D}
\newcommand{\Q}{{\mathbb Q}}
\newcommand{\pa}{\partial}
\newcommand{\na}{\nabla}
\newtheorem{theorem}{Theorem}[section]
\newtheorem{lemma}[theorem]{Lemma}
\newtheorem{remark}[theorem]{Remark}
\begin{document}

\title[Trend to equilibrium of renormalized solutions to reaction-cross-diffusion systems]{Trend to equilibrium of renormalized solutions to reaction-cross-diffusion systems}

\author[E. S. Daus]{Esther S. Daus}
\address{Institute for Analysis and Scientific Computing, Vienna University of
	Technology, Wiedner Hauptstra\ss e 8--10, 1040 Wien, Austria}
\email{esther.daus@tuwien.ac.at}

\author[B. Q. Tang]{Bao Quoc Tang}
\address{Institute of Mathematics and Scientific Computing, University of Graz, 
  Heinrichstrasse 36, 8010 Graz, Austria}
\email{quoc.tang@uni-graz.at}

\date{\today}

\thanks{}

\begin{abstract}
	The convergence to equilibrium of renormalized solutions to reaction-cross-diffusion systems in a bounded domain under no-flux boundary conditions is studied. The reactions model complex balanced chemical reaction networks coming from mass-action kinetics and thus do not obey any growth condition, \textcolor{black}{while the diffusion matrix is of cross-diffusion type and hence nondiagonal and neither symmetric nor positive semi-definite, but the system admits a formal gradient-flow or entropy structure.
	The diffusion term generalizes the population model of Shigesada, Kawasaki and Teramoto to an arbitrary number of species.} 
	By showing that any renormalized solution satisfies the conservation of masses and a weak entropy-entropy production inequality, it can be proved under the assumption of no boundary equilibria that {\it all} renormalized solutions converge exponentially to the complex balanced equilibrium with a rate which is explicit up to a finite dimensional inequality.
\end{abstract}

\keywords{Strongly coupled parabolic systems, reaction-cross-diffusion systems,
renormalized solutions, conservation laws, entropy method, exponential time decay.}

\subjclass[2000]{35K51, 35K55, 35B40, 80A32.}

\maketitle



\section{Introduction}
Multi-species systems appear in many applications in biology, physics and chemistry, and can be modeled by reaction-cross-diffusion systems.
We want to study the convergence to equilibrium of reaction-cross-diffusion systems with strongly growing reactions, where the system (without reactions) is of formal gradient-flow structure and thus admits an entropy estimate. But since the reactions do not obey any growth condition, 
 this estimate is not enough to define weak solutions, which motivates the study of renormalized solutions \textit{\`{a} la} J.~Fischer \cite{Fi15}. Our goal is to show that any renormalized solution satisfies the conservation of masses and a weak entropy-entropy production inequality, and consequently, under the assumption of no boundary equilibria, all renormalized solutions converge to equilibrium with an exponential rate which is explicit up to a finite dimensional inequality.

The convergence to equilibrium for reaction-diffusion systems with linear diffusion has been studied extensively, see e.g. \cite{CDF14,DeFe06, DeFe14} and references therein, while much less is known for nonlinear diffusion or cross diffusion, see \cite{FLT17} for a porous-medium type diffusion and \cite{DJT18} for Maxwell-Stefan diffusion. In this work, we study the convergence to equilibrium for a cross-diffusion model originally introduced by Shigesada, Kawasaki and Teramoto \cite{SKT79} in population dynamics. 
The existence of global weak solutions for this class of cross-diffusion models with at most linearly growing reactions
has been attracted a lot of attention recently by exploiting its formal gradient-flow structure, see \textit{e.g} \cite{CDJ18,ChJu04, ChJu06, DLM14, DLMT15,Jue15, Jue16}. Unfortunately, for strongly growing reactions \textcolor{black}{(such as chemical reactions)} this does not provide enough regularity to define weak solutions. Hence, the notion of renormalized solutions was introduced in \cite{CJ17} for reaction-cross-diffusion systems in analogy to \cite{Fi15} for reaction-diffusion systems. The standard way for proving convergence to equilibrium via entropy method is to first prove the convergence for an approximate solution, and then by passing to the limit to obtain it also for the {\it constructed} weak solution (see \textit{e.g.} \cite{DJT18}). But since uniqueness for cross diffusion is a very delicate topic (see \textit{e.g.} \cite{CJ18}), it is desirable to prove convergence to equilibrium for {\it all } solutions. This has been recently obtained in \cite{FT17a} for reaction-diffusion systems, and thus in this work, we extend these results to \textcolor{black}{reaction-cross-diffusion systems with strongly growing complex balanced reactions coming from mass-action kinetics.}

More precisely, we consider $n$ chemical substances $S_1, \ldots, S_n$ reacting via $R$ reactions of the form
{\color{black}
\begin{equation}\label{R}
	y_{r,1}S_1 + \ldots + y_{r,n}S_n \xrightarrow{k_r} y_{r,1}'S_1 + \ldots + y_{r,n}'S_n \qquad \text{ or shortly } \qquad y_r\xrightarrow{k_r} y_r', \quad \qquad r=1,\ldots, R,
\end{equation}}
where $y_r = (y_{r,1},\ldots, y_{r,n}), y_r'= (y_{r,1}', \ldots, y_{r,n}') \in (\{0\}\cup [1,\infty))^n$ are the stoichiometric coefficients, and $k_r > 0$ are the reaction rate constants. The corresponding reaction-cross-diffusion system reads for each $i=1,\ldots, n$ as
\begin{equation}\label{S}\tag{S}
	\begin{cases}
		\partial_tu_i - \mathrm{div}\left(\sum_{j=1}^{n}A_{ij}(u)\na u_j\right) = f_i(u), &\text{ for } (x,t)\in \Omega\times (0,T),\\
		\left(\sum_{j=1}^{n}A_{ij}(u)\na u_j\right)\cdot \nu = 0, &\text{ for } (x,t)\in \pa\Omega\times (0,T),\\
		u_{i}(x,0) = u_{i,0}(x), &\text{ for } x\in \Omega,
	\end{cases}
\end{equation}
where $u = (u_1, \ldots, u_n)$ are the population densities and $\Omega$ is a bounded domain with smooth boundary $\partial\Omega$, \textcolor{black}{and $\nu$ is the exterior unit normal vector to $\partial \Omega$}. The reaction terms represent the reactions in \eqref{R}, i.e.
\begin{equation}\label{reactions}
\color{black}{f_i(u) = \sum_{r=1}^{R}k_r(y_{r,i}' - y_{r,i})u^{y_r}} \quad \text{ with } \quad u^{y_r} = \prod_{i=1}^{n}u_i^{y_{r,i}},
\end{equation}
while the diffusion matrix $A(u) = [A_{ij}(u)]_{i,j=1,\ldots, n}$ is given by
\begin{equation}\label{Aij}
A_{ij}(u) = \delta_{ij}\left(a_{i0} + \sum_{k=1}^{n}a_{ik}u_k \right) + a_{ij}u_i,
\end{equation}
where $a_{i0}, a_{ij} \geq 0$ for all $i,j=1,\ldots, n$ and $\delta_{ij}$ denotes the Kronecker delta. They are assumed to satisfy (in analogy to \cite{CJ17}) either 
the weak cross-diffusion condition 
\begin{equation}\label{weak-cross}
\alpha:= \min_{i=1,\ldots,n} \left(a_{ii} - \frac14 \sum_{i=1}^n \left(\sqrt{a_{ij}} - \sqrt{a_{ji}}  \right)^2\right) > 0,
\end{equation}
or the detailed-balance condition\footnote{This should not be confused with the detailed balance condition occuring in reactions \textcolor{black}{or in even more general micro-reversible processes}. Also note that in \cite{CJ17} the detailed balance diffusion condition was $\pi_i a_{ij} = \pi_j a_{ji}$ for some positive constants $\pi_i>0$. Here we choose $\pi_i=1$ for $i=1,\ldots, n$ for the compatibility with the reactions.}
\begin{equation}\label{detail-diffusion}
a_{ij} = a_{ji} \quad \mbox{for all} ~~1\leq i,j\leq n.
\end{equation}
Let $m = \mathrm{codim}\{y_r'- y_r\}_{r=1,\ldots, R}^{\top}$, then if $m>0$ there exists a matrix $\mathbb Q \in \mathbb R^{m\times n}$ whose rows form a basis of $\mathrm{ker}\{y_r'- y_r\}_{r=1,\ldots, R} \in \mathbb R^{n\times R}$. From \eqref{reactions} it follows that $\mathbb Q [f_1(u), \ldots, f_n(u)]^{\top} = 0$, and therefore \eqref{S} formally possesses $m$ conservation laws
\begin{equation*}
	\mathbb Q\overline{u}(t) = \mathbb Q \overline{u}_0 =: \mathbf M \quad \text{ for all } \quad t>0,
\end{equation*}
where $\overline{u} = (\overline{u}_1, \ldots, \overline{u}_n)$ and $\overline{u}_i = \frac{1}{|\Omega|}\int_{\Omega}u_idx$. The system \eqref{S} is said to satisfy the \textit{complex balanced condition} if there exists a positive {\it complex balanced equilibrium} $u_\infty = (u_{1,\infty}, \ldots, u_{n,\infty})\in (0,\infty)^n$, such that at $u_\infty$ the total out-flow and in-flow at each complex are balanced, i.e.
\begin{equation}\label{com-equi}
	\sum_{\{r: y_r = y\}}k_ru_\infty^{y_r} = \sum_{\{s: y_s' = y\}}k_su_\infty^{y_s} \quad \text{ for all } \quad y\in \{y_r, y_r'\}_{r=1,\ldots, R}.
\end{equation}
It was proved in \cite{Fei} that \black{if $m>0$ then} for each positive initial mass vector $\mathbf M$ there exists a unique positive complex balanced equilibrium $u_\infty \in (0,\infty)^n$, \black{while when $m=0$ the system has a unique positive complex balanced equilibrium for any positive initial data}. Note that there could possibly exist many {\it boundary equilibria}, i.e. $u^*\in \partial(0,\infty)^n$ and $u^*$ satisfies \eqref{com-equi}.

\medskip
The main result of this paper reads as follows.
\begin{theorem}\label{thm:main}
	Let $\Omega$ be a bounded domain with smooth boundary $\partial\Omega$. Assume $a_{i0}$, $a_{ii}>0$, $a_{ij}\geq 0$, and let the diffusion matrix $A(u)$ satisfy either \eqref{weak-cross} or \eqref{detail-diffusion}. 
	Assume that \eqref{S} satisfies the complex balanced condition \eqref{com-equi}. Then, for any nonnegative measurable initial data $u_0\in L^1(\Omega)^n$ such that $u_{i,0}\log u_{i,0}\in L^1(\Omega)$ for all $i=1,\ldots, n$, there exists a global nonnegative renormalized solution $u = (u_1, \ldots, u_n)$ to \eqref{S}, that is, for all $T>0$,
	\begin{equation*}
	u_i\log u_i \in L^\infty(0,T;L^1(\Omega)), \quad \mbox{and}~~\quad\black{ \|\sqrt{u_i}\|_{L^2(0,T;H^1(\Omega))}, \|u_i\|_{L^2(0,T;H^1(\Omega))} \leq C(T)}
	\end{equation*}
	and for any smooth function $\xi \in C^\infty([0,\infty)^n)$ with compactly supported $D\xi$, it holds for all test functions $\psi \in C^\infty_0(\overline{\Omega}\times [0,T))$ that
	\begin{equation}\label{defi.renorm}
	\begin{aligned}
	-\int_{\Omega}\xi(u_0)\psi(\cdot, 0)dx - \int_0^T\int_{\Omega}\xi(u)\pa_t\psi dxdt &= -\sum_{i,k=1}^n\int_0^T\int_{\Omega}\pa_i\pa_k\xi(u)\left(\sum_{j=1}^{n}A_{ij}(u)\na u_j\right)\na u_k\psi dxdt\\
	&\quad - \sum_{i=1}^n\int_0^T\int_{\Omega}\pa_i\xi(u)\left(\sum_{j=1}^{n}A_{ij}(u)\na u_j\right)\na \psi dxdt + \sum_{i=1}^{n}\int_0^T\int_{\Omega}\pa_i\xi(u)f_i(u)\psi dxdt.
	\end{aligned}
	\end{equation}
	Assume additionally that \eqref{S} does not have any boundary equilibria and fix an initial mass vector $\mathbf M$. Then, any renormalized solution to \eqref{S} with positive initial mass $\mathbf M$, i.e. $\Q \overline{u}_0 = \mathbf M$, converges exponentially to the equilibrium, i.e.
	\begin{equation*}
		\sum_{i=1}^{n}\|u_i(t) - u_{i\infty}\|_{L^1(\Omega)} \leq Ce^{-\lambda t} \quad \text{ for all } \quad t>0,
	\end{equation*}
	where $C>0$ and $\lambda>0$ are constants which can be computed explicitly up to a finite dimensional inequality.
\end{theorem}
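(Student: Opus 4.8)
The plan is to prove the two assertions separately, tied together by the relative entropy functional
\[
\E[u] = \int_\Omega \sum_{i=1}^n \Big( u_i \log\frac{u_i}{u_{i\infty}} - u_i + u_{i\infty}\Big)\,dx,
\]
and its dissipation $\D[u] = \D_{\mathrm{diff}}[u] + \D_{\mathrm{react}}[u]$. First I would establish existence of a renormalized solution by approximation and the entropy method. I would truncate the reaction terms to bounded Lipschitz fields $f_i^{(k)}$, add an $\varepsilon$-ellipticity regularization to the diffusion, and solve the regularized system via the boundedness-by-entropy technique combined with an implicit time discretization and a fixed-point argument. The point of the gradient-flow structure encoded in \eqref{weak-cross} or \eqref{detail-diffusion} is that the diffusion entropy production $\D_{\mathrm{diff}}$ dominates $\sum_i(\|\na\sqrt{u_i}\|_{L^2}^2 + \|\na u_i\|_{L^2}^2)$, while the complex balanced condition \eqref{com-equi} makes $\D_{\mathrm{react}}\ge 0$; together these yield, uniformly in the approximation parameters, the bounds $\sup_t\int_\Omega u_i^{(k)}\log u_i^{(k)}\,dx\le C$ and $\|\sqrt{u_i^{(k)}}\|_{L^2(0,T;H^1)} + \|u_i^{(k)}\|_{L^2(0,T;H^1)}\le C(T)$.

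Next I would extract a convergent subsequence. A dual estimate on $\pa_t u_i^{(k)}$ together with the Aubin--Lions lemma gives strong $L^1(\Omega\times(0,T))$ convergence $u^{(k)}\to u$, with $\na u_i^{(k)}\rightharpoonup \na u_i$ weakly in $L^2$. Renormalization is precisely the device that lets me pass to the limit despite the uncontrolled reactions: testing the regularized equation with $\pa_i\xi(u^{(k)})\psi$ for $\xi$ having compactly supported $D\xi$, the reaction contribution $\sum_i\pa_i\xi(u^{(k)})f_i(u^{(k)})\psi$ stays bounded because $\pa_i\xi$ vanishes for large $u$, so dominated convergence applies; the second-order diffusion term converges by combining the weak $L^2$ gradient convergence with the strong convergence of $\pa_i\pa_k\xi(u^{(k)})$ and of the coefficients $A_{ij}(u^{(k)})$ from \eqref{Aij}. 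This produces \eqref{defi.renorm}. To recover the conservation laws and the weak entropy--entropy-production inequality for the limit I would approximate the conserved linear functionals $\sum_i q_{\ell i}u_i$ and the entropy density by admissible $\xi$ with compactly supported derivatives, insert them into \eqref{defi.renorm}, and pass to the limit; for the masses this gives $\Q\,\overline u(t)=\Q\,\overline u_0=\mathbf M$, and for the entropy, arranging the approximation so that the production converges from below and invoking Fatou / weak lower semicontinuity, I obtain $\E[u(t)]+\int_s^t\D[u(\tau)]\,d\tau\le\E[u(s)]$ for a.e.\ $s\le t$.

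For the exponential convergence I would run the entropy method on this weak inequality, the goal being the functional estimate $\D[u]\ge\lambda\,\E[u]$. I would split $\E[u]$ into the entropy of $u$ relative to its spatial average $\overline u$ and the entropy of $\overline u$ relative to $u_\infty$. The diffusion production $\D_{\mathrm{diff}}$ controls the first piece through a logarithmic Sobolev / Poincaré inequality that quantifies spatial homogenization, while the chemical production $\D_{\mathrm{react}}$ controls the second piece through a finite-dimensional reaction entropy--entropy-production inequality for constant states; the assumption of no boundary equilibria is exactly what guarantees, via a compactness/contradiction argument over the compact set of constant states with fixed mass $\mathbf M$, that this finite-dimensional inequality holds with a strictly positive constant. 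Combining the two pieces gives $\D[u]\ge\lambda\,\E[u]$, whence a (weak) Gronwall argument yields $\E[u(t)]\le\E[u(0)]e^{-\lambda t}$, and the Csisz\'ar--Kullback--Pinsker inequality $\sum_i\|u_i-u_{i\infty}\|_{L^1}^2\le C\,\E[u]$ converts this into the stated $L^1$ decay (after relabelling $\lambda$).

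The main obstacle, in both halves, is the complete absence of any growth control on the reactions. In the existence part this forces the entire limit passage through renormalization and forbids handling $f_i(u)$ directly; the delicate point is transferring the entropy inequality from the approximations to the renormalized limit with the production term intact, since the natural entropy density does \emph{not} have compactly supported derivative and must be reached by a carefully chosen approximating family. In the convergence part, the difficulty is that the argument must hold for \emph{every} renormalized solution rather than a single constructed one, so only the weak inequality $\E[u(t)]+\int_s^t\D\,d\tau\le\E[u(s)]$ is available; the one genuinely nonexplicit ingredient is the constant in the finite-dimensional reaction inequality supplied by the no-boundary-equilibria hypothesis, which is why the decay rate is explicit only up to that finite dimensional inequality.
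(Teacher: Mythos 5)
Your overall route is the same as the paper's: derive the conservation laws and a weak entropy--entropy-production inequality directly from the renormalized formulation \eqref{defi.renorm}, so that they hold for \emph{every} renormalized solution, then invoke the functional inequality $\D(u)\ge\lambda\,\E(u|u_\infty)$ of Fellner--Tang \cite{FT17a} under the no-boundary-equilibria assumption, and conclude by Gronwall and Csisz\'ar--Kullback--Pinsker. However, two of your steps would not survive being made precise.

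\emph{Conservation laws.} If you approximate the conserved functional by $\xi(u)=\phi_M\bigl(\sum_i q_iu_i\bigr)$ and insert it into \eqref{defi.renorm}, the reaction term indeed vanishes (since $\sum_i q_if_i=0$), but you are left with the remainder $\sum_{i,k}\int_s^t\int_{\Omega} \phi_M''\bigl(\sum_l q_lu_l\bigr)q_iq_k\bigl(\sum_j A_{ij}(u)\na u_j\bigr)\cdot\na u_k\,dx\,d\tau$. By \eqref{Aij} the matrix $A_{ij}(u)$ grows linearly in \emph{all} components of $u$, so the integrand is of order $|\phi_M''(\sum_l q_lu_l)|\,(1+|u|)\,|\na u|^2$; since a row $q$ of $\Q$ may have zero or sign-changing entries, the value of $\sum_l q_lu_l$ controls nothing about $|u|$, there is no integrable majorant, and this term need not vanish as $M\to\infty$. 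The paper's fix (following Fischer \cite{Fi17}) is to put the linear functional and the entropy inside \emph{one} cutoff, $\xi(u)=\phi_M\bigl(\beta\sum_i q_iu_i+E(u+\eta|u_\infty)\bigr)$, and to impose the quantitative decay $|\phi_M''(s)|\le C/(1+s\log(1+s))$ of \eqref{phi}: the superlinearly growing entropy inside the cutoff then dominates both the linear growth of $A(u)$ and the logarithmic factors coming from $DE$, the remainder vanishes, and the two inequalities follow by dividing by $\beta$ and sending $\beta\to+\infty$ and $\beta\to-\infty$. The same quantitative decay of $\phi_M''$ (not merely ``Fatou / lower semicontinuity'') is what makes the analogous remainder in the weak entropy inequality (the term $I_6$ in the paper) disappear; note also that $E$ is inadmissible both at infinity and at $u_i=0$, which is why the shift $u+\eta$ is needed before truncating.

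\emph{Existence.} In your compactness argument you pass to the limit in the quadratic diffusion term by ``combining the weak $L^2$ gradient convergence with the strong convergence of $\pa_i\pa_k\xi(u^{(k)})$ and of the coefficients''. That step is invalid: the expression $\pa_i\pa_k\xi(u^{(k)})\bigl(\sum_jA_{ij}(u^{(k)})\na u^{(k)}_j\bigr)\cdot\na u^{(k)}_k$ is a product of two weakly convergent gradient factors, and weak convergence does not pass through such products. One needs strong (or a.e.) convergence of the gradients of the approximations, which is the main technical content of Chen--J\"ungel \cite{CJ17} (and of Fischer \cite{Fi15} in the diagonal-diffusion case). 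The paper sidesteps this entirely by citing \cite[Theorem 1]{CJ17} for existence and only verifying that the complex balanced condition implies the structural hypothesis (H4) there, via \eqref{ineq.new} with $\pi_i=1$ and $\lambda_i=-\log u_{i\infty}$. The remaining ingredients of your proposal (the splitting of the relative entropy, the role of no boundary equilibria in the finite-dimensional inequality, Gronwall, and the Csisz\'ar--Kullback--Pinsker inequality) do match the paper, which imports them from \cite{FT17a}.
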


\begin{remark}
\textnormal{The convergence result in Theorem \ref{thm:main}, \black{in case $m>0$}, depends only on the initial masses but not on the precise initial data. Thus, two solutions with different initial data but same initial masses converge exponentially to the same equilibrium. \black{When $m=0$, i.e. there are no conservation laws, then all renormalized solutions converge to the unique positive equilibirium for any positive initial data.}}
\end{remark}

The main tool in the proof of Theorem \ref{thm:main} is to consider the relative entropy
\begin{equation}\label{re_entropy}
	\E(u|u_\infty) = \int_{\Omega}E(u|u_\infty)dx, \quad \mbox{where} \quad E(u|u_\infty) = \sum_{i=1}^n \left(u_i\log(u_i/u_{i\infty}) - u_i + u_{i\infty}\right)\geq 0,
\end{equation}
 for which formally for any solution to \eqref{S} the entropy production has the following form 
\begin{equation}\label{eep}
	\frac{d}{dt}\E(u|u_\infty) \leq  -\D(u) \quad \text{ with } \quad \D(u) = \sum_{i=1}^{n}a_{i0}\int_{\Omega}\frac{|\na u_i|^2}{u_i}dx + \sum_{r=1}^{R}k_r u_{\infty}^{y_r}\int_{\Omega}\Psi\left(\frac{u^{y_r}}{u_\infty^{y_r}},\frac{u^{y_r'}}{u_\infty^{y_r'}} \right)dx,
\end{equation}
where $\Psi(x,y) = x\log(x/y) - x + y$. For details we refer to \cite{CDJ18} for the cross-diffusion term and to \cite{FT17a} for the reaction term. Moreover, for all nonnegative measurable functions $u = (u_1, \ldots. u_n)$ satisfying the conservation laws
\begin{equation}\label{conservation}
	\mathbb Q \overline{u} = \mathbf M,
\end{equation}
it was proved (e.g. \cite{FT17a}) that
\begin{equation*}
	\D(u) \geq \lambda \E(u|u_\infty),
\end{equation*}
where $\lambda$ is an explicit constant up to a finite dimensional inequality. Then, still formally, one obtains the desired exponential decay
\begin{equation*}
	\E(u(t)|u_\infty) \leq e^{-\lambda t}\E(u_0|u_\infty).
\end{equation*}
Unfortunately, the notion of renormalized solutions is very weak, so that the entropy-entropy production inequality \eqref{eep} or even the conservation laws \eqref{conservation} (which only concern the $L^1$-norm of the solution) are  not easy to verify. As mentioned before, one can argue via approximate solutions, and thus obtain the convergence to equilibrium for {\it one } renormalized solution, see e.g. \cite{DJT18}. However, it is not clear if all  renormalized solutions (\textcolor{black}{in the sense of definition in \eqref{defi.renorm}}) can be approximated in such a way. Our aim here is to prove that {\it all } renormalized solutions with the same initial mass converge to the unique equilibrium. The main idea is to show that the conservation laws \eqref{conservation} and a weaker version of the entropy-entropy production inequality (see Lemma \eqref{weak-eep}) hold for any renormalized solution. Our proof uses the techniques developed in \cite{Fi17}.

\section{Proof of the main result}
%

\begin{lemma}[Weak entropy-entropy production inequality]\label{weak-eep}
	For any renormalized solution $u$ of \eqref{S} it holds that
	\begin{equation*}
		\E(u(t)|u_\infty) + \int_s^t\D(u(\tau))d\tau \leq \E(u(s)|u_\infty) \quad \mbox{for a.e.} \quad t>s>0,
	\end{equation*}
	where $\E$ and $\D$ are defined in \eqref{re_entropy} and \eqref{eep} respectively.
\end{lemma}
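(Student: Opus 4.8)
The plan is to convert the formal entropy computation behind \eqref{eep} into a rigorous one-sided inequality by testing the renormalized formulation \eqref{defi.renorm} with suitable approximations of the relative entropy density $E(\cdot\,|u_\infty)$ and of the indicator of the time interval $[s,t]$, following the strategy of \cite{Fi17}. The obstruction to testing directly with $\xi=E(\cdot\,|u_\infty)$ is twofold: $E$ is not smooth up to the boundary $\{u_i=0\}$ (its gradient $\pa_iE=\log(u_i/u_{i\infty})$ and Hessian $\pa_i^2E=1/u_i$ blow up there), and $DE$ is not compactly supported. Hence the first step is to build a family of \emph{admissible} renormalizers $\xi_K\in C^\infty([0,\infty)^n)$ with $D\xi_K$ compactly supported, obtained by regularizing $E$ near the hyperplanes $\{u_i=0\}$ (replacing the weight $1/u_i$ by a bounded smooth one) and by damping the gradient outside the sublevel set $\{E\le K\}$, which is compact in $[0,\infty)^n$ because $E(u)\to\infty$ as $|u|\to\infty$. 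Concretely one arranges $\pa_i\xi_K=\eta_K(u)\,\pa_iE_\delta$ with a cutoff $0\le\eta_K\le1$, $\eta_K\nearrow1$, so that $\xi_K\nearrow E(\cdot\,|u_\infty)$ pointwise.

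The second step is the time localization. I would insert in \eqref{defi.renorm} space--independent test functions $\psi=\chi_h(\tau)$, where $\chi_h$ is a Lipschitz approximation of $\mathbf 1_{[s,t]}$ that rises from $0$ to $1$ on $[s-h,s]$ and falls back on $[t,t+h]$; since $\na\psi=0$ and $\chi_h(0)=0$, the boundary and $\na\psi$ terms drop out, and letting $h\to0$ at Lebesgue points of $\tau\mapsto\int_\Omega\xi_K(u)\,dx$ yields, for a.e. $0<s<t$,
\begin{equation*}
\begin{aligned}
\int_\Omega\xi_K(u(t))\,dx-\int_\Omega\xi_K(u(s))\,dx &= -\sum_{i,k}\int_s^t\!\!\int_\Omega\pa_i\pa_k\xi_K(u)\Big(\textstyle\sum_j A_{ij}(u)\na u_j\Big)\na u_k\,dx\,d\tau \\
&\quad + \sum_i\int_s^t\!\!\int_\Omega\pa_i\xi_K(u)f_i(u)\,dx\,d\tau ,
\end{aligned}
\end{equation*}
a single exceptional null set of times being fixed along a countable sequence $K\to\infty$.

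The final and decisive step is to pass to the limit $K\to\infty$ (sending the near-boundary regularization $\delta\to0$ jointly). For the entropy terms I would use monotone convergence together with $u_i\log u_i\in L^\infty(0,T;L^1)$ to recover $\E(u(t)|u_\infty)-\E(u(s)|u_\infty)$. For the diffusion term the main contribution converges to $\int_s^t\!\int_\Omega\sum_i u_i^{-1}\big(\sum_j A_{ij}\na u_j\big)\na u_i$, which by the structural conditions \eqref{weak-cross} or \eqref{detail-diffusion} (as in \cite{CDJ18,CJ17}) dominates $\int_s^t\sum_i a_{i0}\int_\Omega\frac{|\na u_i|^2}{u_i}\,dx\,d\tau$; Fatou/weak lower semicontinuity then gives that the $\liminf$ of the diffusion term is at least this dissipation, while the error terms created by the cutoff are supported in the high-entropy region $\{E\approx K\}$ and vanish as $K\to\infty$ by the uniform entropy bound and $\na\sqrt{u_i},\na u_i\in L^2$. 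The reaction term is the main obstacle, since $f_i(u)$ carries the \emph{a priori} uncontrolled monomials $u^{y_r}$ and cannot be handled by integrability alone. Here the complex balanced condition \eqref{com-equi} is essential: it yields the pointwise identity $\sum_i\log(u_i/u_{i\infty})f_i(u)=-\sum_r k_r u_\infty^{y_r}\Psi\big(u^{y_r}/u_\infty^{y_r},u^{y_r'}/u_\infty^{y_r'}\big)\le0$, so the reaction entropy-production density is sign-definite. Applying Fatou's lemma to the nonnegative density $-\sum_i\log(u_i/u_{i\infty})f_i(u)$ gives that the $\limsup$ of the reaction term is at most $-\int_s^t\sum_r k_r u_\infty^{y_r}\int_\Omega\Psi(\cdots)\,dx\,d\tau$, i.e.\ minus the reaction part of $\D$, with finiteness of $\int_s^t\D$ emerging as a byproduct. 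Combining, the right-hand side has $\limsup\le-\int_s^t\D(u)$ while the left-hand side converges to $\E(u(t)|u_\infty)-\E(u(s)|u_\infty)$, which is the asserted inequality. The delicate points I expect are making the one-sided reaction estimate uniform in the cutoff and ensuring that the near-boundary regularization does not destroy the structural positivity of the diffusion quadratic form, the latter being resolved by sending $\delta\to0$ and using $\na u_i=0$ a.e.\ on $\{u_i=0\}$.
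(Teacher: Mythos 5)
Your overall strategy is the same as the paper's: renormalize with a cutoff of the (shift-regularized) relative entropy, test with $\psi\approx 1$ in time, use the structural inequalities for the diffusion quadratic form and the complex-balance sign identity for the reactions, and pass to the limit. (Note that your ansatz $\pa_i\xi_K=\eta_K(u)\pa_iE_\delta$ is only curl-free when $\eta_K$ is a function of $E_\delta$ itself, so a consistent version of your construction is exactly the paper's $\xi=\phi_M(E(u+\eta|u_\infty))$.) However, there is a genuine gap at precisely the step that constitutes the bulk of the paper's proof: the error terms coming from the \emph{second} derivative of the cutoff hitting the cross-diffusion part of $A(u)$. You dismiss these with ``supported in the high-entropy region $\{E\approx K\}$ and vanish as $K\to\infty$ by the uniform entropy bound and $\na\sqrt{u_i},\na u_i\in L^2$,'' but this is not enough. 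These terms have the form
\begin{equation*}
\int_s^t\!\!\int_\Omega \theta'(E)\,\log\Big(\tfrac{u_i}{u_{i\infty}}\Big)\log\Big(\tfrac{u_k}{u_{k\infty}}\Big)\Big(\textstyle\sum_j A_{ij}(u)\na u_j\Big)\na u_k\,dx\,d\tau ,
\end{equation*}
and since $A_{ij}(u)$ grows linearly in $u$, the density behaves like $\theta'(E)\,|u|(\log|u|)^2\,|\na u|^2$, with $|u|(\log|u|)^2\sim E\log E$ on $\{E\approx K\}$. A generic cutoff with $|\theta'|\sim 1/K$ on $\{K\le E\le 2K\}$ therefore only yields a bound of order $\log K\int_{\{E\approx K\}}|\na u|^2$, which has no reason to vanish (absolute continuity of the integral gives no rate to beat the $\log K$). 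This is why the paper imposes the quantitative decay $|\phi_M''(s)|\le C/(1+s\log(1+s))$ in \eqref{phi}: it is tuned exactly so that $|\phi_M''(E)|\,|u_j|\,|\log(u_i/u_{i\infty})||\log(u_k/u_{k\infty})|\le C$ pointwise, giving an integrable dominating function ($C|\na u_i||\na u_k|$, $C|\na\sqrt{u_i}||\na\sqrt{u_k}|$, etc.) so that dominated convergence kills the error terms ($I_{61},I_{62},I_{63}$ in the paper). Without specifying this decay rate, your limit passage does not go through.

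A secondary issue is your proposal to send the near-boundary regularization $\delta\to 0$ \emph{jointly} with $K\to\infty$. For $\delta>0$ the reaction density $\sum_i\log((u_i+\delta)/u_{i\infty})f_i(u)$ is not sign-definite (the correction $\sum_i\log((u_i+\delta)/u_i)f_i(u)$ can be large and positive, e.g.\ when $u_i$ is small but some monomial $u^{y_r}$ is large), so Fatou does not apply to it directly. The paper's order of limits matters: first $\eta\to 0$ at fixed $M$, using that $\phi_M'$ has compact support to localize $u$ to a bounded set where $f_i(u)\ge -C(M)u_i$ by local Lipschitz continuity, and only then $M\to\infty$, where the exact sign identity \eqref{ineq.new} becomes available. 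With a joint limit the localization constant $C(K)$ blows up and this domination argument breaks; the fix is simply to take the limits sequentially, as in the paper.
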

\begin{proof}
    \textcolor{black}{From this point on, we consider $C>0$ as a generic constant whose value can change from line to line, or even in the same line.}
    For $M>0$, let $\phi_M: [0,\infty) \to \mathbb{R}$ be a smooth function with 
	\begin{align}
	      \phi_M(s) &= s, \quad \mbox{for}~~s \leq M, \qquad 
	      \phi_M'(s) = 0, \quad \mbox{for}~~ s \geq M^C,\qquad 
	      \phi_M'(s) \in [0,1],\nonumber \\
	      \left|\phi_M''(s)\right| &\leq \frac{C}{1 + s \log (1+s)} \quad \mbox{for all}~~ s\geq 0\label{phi}.
	\end{align}
	Moreover, we set 
	\begin{align*}
	      \xi(u) = \phi_M(E(u + \eta|u_\infty)), 
	\end{align*}
    where $u+\eta = (u_1 + \eta, \dots, u_n + \eta)$ for some $\eta >0$. \black{The regularization $\eta>0$ is needed to deal with the potential singularity of $\log u_i$ since a renormalized solution is non-negative but in general not strictly positive.}
    For simplicity, we will write $E(u)$ and $E(u+\eta)$ instead of $E(u|u_\infty)$ and $E(u+\eta|u_\infty)$ respectively inside this proof. Then we can compute
	 \begin{align*}
	      \partial_i \xi(u) &= \phi_M'(E(u + \eta))\log\left(\frac{u_i + \eta}{u_{i\infty}}\right), \\
	      \partial_i \partial_k \xi(u) &= \phi_M''(E(u + \eta))\log\left(\frac{u_k + \eta}{u_{k\infty}}\right)\log\left(\frac{u_i + \eta}{u_{i\infty}}\right) 
	      + \phi_M'(E(u + \eta))\frac{\delta_{ik}}{u_i + \eta}.
	 \end{align*}
    By choosing $\psi = 1$ \black{, or more precisely a smooth version of $1$ with compact support in $[0,T-\delta]$ then let $\delta \to 0$ (see \cite[Lemma 11]{CJ17} for more details)} in the definition of the renormalized solutions, we get 
	\begin{align}\label{eq.star}
	      &I_1(\eta,M):=\int_{\Omega}\phi_M(E(u+\eta))\,dx\biggr|_s^t \notag \\
	      &= -\sum_{i,k=1}^n \int_s^t \int_{\Omega} \left(\phi_M''(E(u + \eta))\log\left(\frac{u_k + \eta}{u_{k\infty}}\right)\log\left(\frac{u_i + \eta}{u_{i\infty}}\right) 
	      + \phi_M'(E(u + \eta))\frac{\delta_{ik}}{u_i + \eta} \right)\left(\sum_{j=1}^n A_{ij}(u)\nabla u_j\right)\nabla u_k\,dxdt \\
	      & \quad + \sum_{i=1}^n \int_s^t \int_{\Omega} \phi'_M(E(u + \eta))\log\left(\frac{u_i + \eta}{u_{i\infty}}\right)f_i(u)\,dxdt \notag \\
	      & =: I_2(\eta,M) + I_3(\eta,M).  \notag
	\end{align}
  Our first goal now is to pass to the limit $\eta \to 0$ in \eqref{eq.star}. 
  Clearly, due to the dominated convergence theorem, we have for the left-hand side of \eqref{eq.star} that
	\begin{align*}
	      \lim_{\eta \to 0} I_1(\eta,M)  = \int_{\Omega}\phi_M(E(u))\,dx\, \biggr|_s^t.
	\end{align*}
  Next, since $\phi'_M$ has compact support {\color{black}the integrand of $I_3(\eta, M)$ vanishes when $|u|$ is large. Now for $|u| \leq C(M)$ we can use the property $f_i(u) \geq 0$ when $u_i = 0$ and the local Lipschitz continuity of $f_i(u)$ to estimate $f_i(u) \geq -C(M)u_i$. Hence by considering the signs of $f_i(u)$ and $\log(\frac{u_i+\eta}{u_{i\infty}})$ one obtains easily}
	\begin{align*}
	      f_i(u)\log\left(\frac{u_i + \eta}{u_{i \infty}}\right) \leq C(M) u_i \left|\log \frac{u_i + \eta}{u_{i \infty}}\right|.
	\end{align*}
  Thus, Fatou's lemma yields
	\begin{align*}
	      \limsup_{\eta \to 0} I_3(\eta,M)
	      \leq \sum_{i=1}^n \int_s^t \int_{\Omega} \phi'_M(E(u)) \log\left(\frac{u_i}{u_{i\infty}}\right)f_i(u)\,dxd\tau.
	       	\end{align*}
  Next, we split $I_2(\eta,M)$ in \eqref{eq.star} into 
	\begin{align*}
	    I_2(\eta,M) 
	      &=
	      -\sum_{i,k=1}^n \int_s^t \int_{\Omega} \phi''_M(E(u + \eta))\log\left(\frac{u_k + \eta}{u_{k\infty}}\right) \log\left(\frac{u_i + \eta}{u_{i\infty}}\right) \left(\sum_{j=1}^n A_{ij}(u)\nabla u_j\right)\nabla u_k \,dxd\tau \\
	      &\quad - \sum_{i=1}^n \int_s^t \int_{\Omega} \phi'_M(E(u + \eta)) \frac{1}{u_i + \eta} \left(\sum_{j=1}^n A_{ij}(u)\nabla u_j\right)\nabla u_i \,dxd\tau \\
	      &=: I_4(\eta,M) + I_5(\eta,M).
	\end{align*}
  In order to show the convergence of $I_4$, we use that $|A_{ij}(u)| \leq C\left(1 + \sum_{k=1}^n|u_k| \right)$ and \black{$\|\nabla u_j\|_{L^2(\Omega \times (0,T))} \leq C(T)$} thanks to the regularity of renormalized solutions. Then, recalling $\phi_M''$ has a compact support, we obtain by dominated convergence theorem that
      \begin{align*}
	    \lim_{\eta \to 0} I_4(\eta,M) = - \sum_{i,k=1}^n \int_s^t \int_{\Omega} \phi''_M(E(u)) \log\left(\frac{u_k}{u_{k\infty}}\right)\log\left(\frac{u_i}{u_{i\infty}}\right)\left(\sum_{j=1}^n A_{ij}(u)\nabla u_j\right)\nabla u_k\,dxd\tau.
      \end{align*}
  In a similar way, we obtain 
      \begin{align*}
	    \lim_{\eta \to 0} I_5(\eta,M) &= - \sum_{i=1}^n \int_s^t \int_{\Omega} \phi'_M(E(u))\frac{1}{u_i}\left(\sum_{j=1}^n A_{ij}(u)\nabla u_j\right)\nabla u_i \,dxd\tau.
      \end{align*}
  From \cite{CJ18} we know that if $A(u)$ satisfies \eqref{weak-cross}, then 
  \begin{equation*}
	  \sum_{i=1}^{n}\frac{1}{u_i}\left(\sum_{j=1}^n A_{ij}(u)\nabla u_j\right)\nabla u_i \geq 4\sum_{i=1}^{n}a_{i0}|\na \sqrt{u_i}|^2 + \alpha\sum_{i=1}^{n}|\na u_i|^2,
  \end{equation*}
  and if $A(u)$ satisfies \eqref{detail-diffusion}, then
  \begin{equation*}
	\sum_{i=1}^{n}\frac{1}{u_i}\left(\sum_{j=1}^n A_{ij}(u)\nabla u_j\right)\nabla u_i \geq 4\sum_{i=1}^{n}a_{i0}|\na \sqrt{u_i}|^2 + 2\sum_{i=1}^{n}a_{ii}|\na u_i|^2 + 2\sum_{i\ne j} a_{ij}|\na \sqrt{u_iu_j}|^2.
	\end{equation*}
	From both cases we infer, by noticing that $a_{i0}>0$ and $4|\na \sqrt{u_i}|^2 = |\na u_i|^2/u_i$,
	\begin{equation*}
		\lim_{\eta\to 0}I_5(\eta,M) \leq - \sum_{i=1}^n \int_s^t \int_{\Omega} \phi'_M(E(u))a_{i0}\frac{|\nabla u_i|^2}{u_i}\,dxd\tau.
	\end{equation*}  
  Putting everything together yields from \eqref{eq.star} 
      \begin{align}\label{eq.BigStar}
	    \int_{\Omega}\phi_M(E(u))\,dx\,\biggr|_s^t &\leq - \sum_{i,k=1}^n \int_s^t \int_{\Omega}\phi_M''(E(u))\log\left(\frac{u_k}{u_{k\infty}}\right)\log\left(\frac{u_i}{u_{i\infty}}\right)\left(\sum_{j=1}^n A_{ij}(u)\nabla u_j\right)\nabla u_k \,dxd\tau \\
	    &- \sum_{i=1}^n a_{i0}\int_s^t \int_{\Omega}\phi_M'(E(u))\frac{|\nabla u_i|^2}{u_i}\,dxd\tau 
	    + \sum_{i=1}^n \int_s^t \int_{\Omega}\phi'_M(E(u))\log\left(\frac{u_i}{u_{i\infty}}\right)f_i(u)\,dxd\tau \notag \\
	    &=: I_6(M) + I_7(M) + I_8(M). \notag
      \end{align}
  Our goal now is to pass to the limit $M\to \infty$ in \eqref{eq.BigStar}. For the left-hand side of \eqref{eq.BigStar}, the convergence is clear due the dominated convergence theorem. 
 For $I_7$ we can use $\sqrt{u_i} \in L^2(0,T; H^1(\Omega))$ and the dominated convergence theorem to obtain 
      \begin{align*}
	      \color{black}{\lim_{M \to \infty} I_7(M) = -\sum_{i=1}^n a_{i0}\int_s^t \int_{\Omega}  \frac{|\nabla u_i|^2}{u_i}\,dxd\tau.}
      \end{align*}
  Since $\sum_{i=1}^n \log\left(\frac{u_i}{u_{i\infty}}\right)f_i(u)\leq 0$,
  we get by Fatou's lemma that
      \begin{align*}
	      \limsup_{M \to \infty} I_8(M) \leq \sum_{i=1}^n \int_s^t \int_{\Omega} \log\left(\frac{u_i}{u_{i\infty}}\right)f_i(u)\,dxd\tau. 
      \end{align*}
      {\color{black}For $I_6(M)$ we first use the identity $\sum_{j=1}^nA_{ij}(u)\na u_j = a_{i0}\na u_i + \sum_{j=1}^na_{ij}(u_j\na u_i + u_i\na u_j)$ to estimate $I_6(M) \leq I_{61}(M) + I_{62}(M) + I_{63}(M)$ where
      \begin{equation*}
      	I_{61}(M) = C\sum_{i,k=1}^{n}\int_s^t\int_{\Omega}\left|\phi_M''(E(u))\log\left(\frac{u_k}{u_{k\infty}}\right)\log\left(\frac{u_i}{u_{i\infty}}\right)\na u_i \na u_k\right|dxd\tau,
      \end{equation*}
      \begin{equation*}
	     I_{62}(M) = C\sum_{i,j,k=1}^{n}\int_s^t\int_{\Omega}|\phi_M''(E(u))||u_j|\left|\log\left(\frac{u_k}{u_{k\infty}}\right)\na u_k\right|\left|\log\left(\frac{u_i}{u_{i\infty}}\right)\na u_i\right|dxd\tau,
      \end{equation*}
      \begin{equation*}
	      I_{63}(M) =  C\sum_{i,j,k=1}^{n}\int_s^t\int_{\Omega}\left|\phi_M''(E(u))\right|\left|\log\left(\frac{u_i}{u_{i\infty}}\right)u_i\right|\left|\log\left(\frac{u_k}{u_{k\infty}}\right)\na u_k\right||\na u_j|dxd\tau.
      \end{equation*}
		For $I_{61}(M)$ we write $\na u_i\na u_k = 4\sqrt{u_i}\sqrt{u_k}(\na\sqrt{u_i}\na \sqrt{u_k})$, then we use the property of $\phi_M''$ in \eqref{phi} to estimate
		      \begin{align*}
			    \left|\phi_M''(E(u))\log\left(\frac{u_k}{u_{k\infty}}\right)\log\left(\frac{u_i}{u_{i\infty}}\right)\right|\sqrt{u_i}\sqrt{u_k} \leq C \frac{\left|\log\left(\frac{u_k}{u_{k\infty}}\right)\right| \left|\log\left(\frac{u_i}{u_{i\infty}}\right)\right|\sqrt{u_k}\sqrt{u_i}}{1 + \sum_{j=1}^n u_j(\log(1 + u_j))^2} \leq C.
		      \end{align*}
		Hence, from the bound $\|\na \sqrt{u_i}\|_{L^2(\Omega\times(0,T))} \leq C(T)$ we obtain by dominated convergence that $\lim_{M\to+\infty}I_{61}(M) = 0$.
	To estimate $I_{62}(M)$ we have first
	\begin{align*}
		\left|\log\left(\frac{u_k}{u_{k\infty}} \right)\na u_k \right| &\leq \chi_{\{u_k \geq 1\}}\left|\log\left(\frac{u_k}{u_{k\infty}} \right)\right|\left|\na u_k \right| + 2\chi_{\{0\leq u_k \leq 1\}}\left|\log\left(\frac{u_k}{u_{k\infty}} \right)\sqrt{u_k}\right|\left|\na \sqrt{u_k} \right|\\
		&\leq \chi_{\{u_k \geq 1\}}\left|\log\left(\frac{u_k}{u_{k\infty}} \right)\right|\left|\na u_k \right| + C|\na \sqrt{u_k}|,
	\end{align*}
	and similarly
		$\left|\log\left(\frac{u_i}{u_{i\infty}} \right)\na u_i \right| \leq \chi_{\{u_i \geq 1\}}\left|\log\left(\frac{u_i}{u_{i\infty}} \right)\right|\left|\na u_i \right| + C|\na \sqrt{u_i}|.$
	Therefore
	\begin{equation*}
		I_{62}(M) \leq C\sum_{i,j,k=1}^n\int_s^t\int_{\Omega}\biggl(J_1(M)|\na u_k||\na u_i| + J_2(M)|\na u_k||\na \sqrt{u_i}| + J_3(M)|\na \sqrt{u_k}||\na u_i| + J_4(M)|\na \sqrt{u_k}||\na \sqrt{u_i}|\biggr)dxd\tau
	\end{equation*}
	with 
	\begin{equation*}
		J_1(M) = |\phi_M''(E(u))||u_j|\chi_{\{u_k \geq 1\}}\chi_{\{u_i \geq 1\}}\left|\log\left(\frac{u_i}{u_{i\infty}}\right)\right|\left|\log\left(\frac{u_k}{u_{k\infty}}\right)\right|, \quad J_4(M) = |\phi_M''(E(u))||u_j|,
	\end{equation*}
	\begin{equation*}
	J_2(M) = |\phi_M''(E(u))||u_j|\chi_{\{u_k \geq 1\}}\left|\log\left(\frac{u_k}{u_{k\infty}}\right)\right|, \quad J_3(M) = |\phi_M''(E(u))||u_j|\chi_{\{u_i \geq 1\}}\left|\log\left(\frac{u_i}{u_{i\infty}}\right)\right|.
	\end{equation*}
	Using \eqref{phi} we see that $|J_i(M)| \leq C$ for all $i=1,\ldots, 4$. Taking into account that $\|\na u_i\|_{L^2(\Omega\times(0,T))}, \|\na \sqrt{u_i}\|_{L^2(\Omega\times(0,T))}\leq C(T)$ we conclude by the dominated convergence theorem that $\lim_{M \to \infty}I_{62}(M) = 0$. The proof of $\lim_{M \to \infty} I_{63}(M) = 0$ is similar so we omit it.}
Consequently, by collecting all results together and using the fact that
\begin{equation}\label{ineq.new}
		\sum_{i=1}^nf_i(u)(\log u_i - \log u_{i\infty}) = -\sum_{r=1}^{R}k_ru_{\infty}^{y_r}\Psi\left(\frac{u^{y_r}}{u_\infty^{y_r}},\frac{u^{y_r'}}{u_\infty^{y_r'}} \right)\leq 0,
	\end{equation}
(see the computations in \cite[Proposition 2.1]{DFT17}), we obtain the desired result.
\end{proof}
\begin{lemma}[Conservation laws]\label{cons}
	When $m>0$, for any renormalized solution $u$ to \eqref{S} it holds that
	\begin{equation*}
		\Q \overline{u}(t) = \Q\overline{u}_0 \quad \text{ for all } \quad t>0.
	\end{equation*}
\end{lemma}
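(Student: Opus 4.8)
The plan is to test the renormalized formulation \eqref{defi.renorm} with functions approximating the linear map $u\mapsto \Q u$ and pass to the limit, exploiting the defining property of $\Q$, namely $\Q(y_r'-y_r)=0$ for every $r$. Fixing a row $q=(q_1,\dots,q_n)$ of $\Q$, this property gives $\sum_i q_i(y_{r,i}'-y_{r,i})=0$ and hence the pointwise identity $\sum_i q_i f_i(u)=0$, which is the algebraic engine of the whole argument. Since the natural test function $\xi(u)=\ell(u):=\sum_i q_i u_i$ is inadmissible (its gradient is not compactly supported), I would introduce a radial cut-off $\Phi\in C^\infty([0,\infty))$ with $\Phi\equiv 1$ on $[0,1]$, $\Phi\equiv 0$ on $[2,\infty)$ and $|\Phi'|+|\Phi''|\le C$, and take $\xi_M(u)=\ell(u)\,\Phi(|u|/M)$, which lies in $C^\infty([0,\infty)^n)$ and has $D\xi_M$ supported in $\{|u|\le 2M\}$. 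Choosing $\psi=1$ (made admissible by the same smooth time-approximation as in Lemma \ref{weak-eep}) kills the middle term of \eqref{defi.renorm} because $\na\psi=0$, leaving only a second-order diffusion contribution and a reaction contribution, with $\int_\Omega \xi_M(u)\,dx\big|_s^t$ on the left.

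For the reaction term I would use $\pa_i\xi_M=q_i\Phi(|u|/M)+\ell(u)\Phi'(|u|/M)\frac{u_i}{M|u|}$, so that $\sum_i\pa_i\xi_M\, f_i=\Phi(|u|/M)\sum_i q_i f_i+\frac{\ell(u)\Phi'(|u|/M)}{M|u|}\sum_i u_i f_i$; the first sum vanishes identically, and the second is supported in $\{M\le|u|\le 2M\}$, where $|\ell(u)|\le C|u|$ and $u_i\le 2M$ bound it by $C\chi_{\{|u|\ge M\}}\sum_r u^{y_r}$. For the diffusion term, a direct differentiation shows $|\pa_i\pa_k\xi_M|\le C/M$ on $\{M\le|u|\le 2M\}$, which, together with $|A_{ij}(u)|\le C(1+|u|)\le CM$ there, bounds the integrand by $C\chi_{\{|u|\ge M\}}|\na u|^2$. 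Since $|\na u|^2\in L^1(\Omega\times(0,T))$ by the regularity of renormalized solutions and $\chi_{\{|u|\ge M\}}\to 0$ a.e.\ as $M\to\infty$, dominated convergence sends the diffusion term to $0$; likewise $\int_\Omega\xi_M(u)\,dx\to\int_\Omega\ell(u)\,dx=|\Omega|\,q\cdot\overline u$ on the left-hand side.

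The decisive step, which I expect to be the main obstacle, is to show that the reaction tail $\int_s^t\int_\Omega\chi_{\{|u|\ge M\}}\sum_r u^{y_r}\,dx\,d\tau$ tends to $0$. This once more reduces to dominated convergence, but now requires the local integrability $u^{y_r}\in L^1(\Omega\times(s,t))$ of each reaction monomial — a genuinely nontrivial fact for strongly growing reactions, since the $H^1$-regularity alone controls only monomials of bounded degree. I would obtain it by combining the finiteness of the entropy production $\int_s^t\D(u(\tau))\,d\tau\le \E(u(s)|u_\infty)<\infty$ supplied by Lemma \ref{weak-eep} — which controls $\int_s^t\int_\Omega\Psi\big(u^{y_r}/u_\infty^{y_r},u^{y_r'}/u_\infty^{y_r'}\big)\,dx\,d\tau$ and hence the region where $u^{y_r}$ is large relative to $u^{y_r'}$ — with the $L^2(0,T;H^1(\Omega))$ regularity that excludes the remaining concentrations, using the complex-balanced (weakly reversible) structure to couple reactant and product complexes, as in the entropy computation of \cite[Proposition 2.1]{DFT17} and \cite{FT17a}.

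Once this integrability is in place, letting $M\to\infty$ yields $q\cdot\overline u(t)=q\cdot\overline u(s)$ for a.e.\ $t>s>0$ and every row $q$ of $\Q$, i.e.\ $\Q\overline u(t)=\Q\overline u(s)$; retaining the initial-datum term in \eqref{defi.renorm} (equivalently, letting $s\to 0$) identifies the common value with $\Q\overline u_0$. Finally, since $\frac{d}{dt}\Q\overline u=\Q\,\overline{f(u)}=0$ holds in the sense of distributions and $t\mapsto \Q\overline u(t)$ is continuous — the reaction terms now being integrable in time — the identity upgrades from a.e.\ $t$ to all $t>0$, which is the assertion of the lemma.
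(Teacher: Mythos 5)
Your reduction of the lemma to the tail estimate is set up correctly, but the step you yourself call decisive --- showing $\int_s^t\int_\Omega\chi_{\{|u|\ge M\}}\sum_r u^{y_r}\,dx\,d\tau\to 0$ --- is a genuine gap, and the tools you invoke cannot close it. What you need there is precisely $f_i(u)\in L^1(\Omega\times(s,t))$, i.e.\ integrability of the reaction monomials; but the failure (or unavailability) of exactly this property is why one works with renormalized rather than weak solutions in the first place. The entropy production from Lemma \ref{weak-eep} controls only $\Psi\bigl(u^{y_r}/u_\infty^{y_r},\,u^{y_r'}/u_\infty^{y_r'}\bigr)$ with $\Psi(x,y)=x\log(x/y)-x+y$, and $\Psi$ degenerates near the diagonal: both monomials can be arbitrarily large while $\Psi$ stays near zero, so no $L^1$ bound on $u^{y_r}$ follows. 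Likewise the regularity $u_i\in L^2(0,T;H^1(\Omega))$, $u_i\log u_i\in L^\infty(0,T;L^1(\Omega))$ controls polynomial moments only up to a fixed low degree (via Sobolev/Gagliardo--Nirenberg interpolation), whereas the $y_r$ are arbitrary. The proposed ``coupling of reactant and product complexes via complex balance'' names no mechanism, and none is known; this is not a fixable technicality but the central difficulty of the problem.

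The paper (following \cite[Proposition 6]{Fi17}) circumvents the integrability issue by a different choice of test function: the linear quantity is placed \emph{inside} the same cutoff as the entropy, $\xi(u)=\phi_M\bigl(\beta\sum_i q_iu_i+E(u+\eta|u_\infty)\bigr)$. The reaction contribution then reads $\phi_M'(\cdots)\bigl(\beta\sum_i q_if_i(u)+\sum_i f_i(u)\log\frac{u_i+\eta}{u_{i\infty}}\bigr)$: the $\beta$-part cancels \emph{pointwise} because $\sum_i q_if_i\equiv 0$ --- no integration, hence no integrability required --- and the remaining part is asymptotically nonpositive by \eqref{ineq.new}, so Fatou's lemma applies exactly as for $I_3$ in Lemma \ref{weak-eep}. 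This gives $\bigl(\beta\int_\Omega q\cdot u\,dx+\E(u|u_\infty)\bigr)\bigr|_0^T\le\int_0^T\D(u(\tau))\,d\tau$ for every $\beta$; dividing by $\beta$ and sending $\beta\to+\infty$, then $\beta\to-\infty$, strips away the entropy terms and yields the two opposite inequalities for $\int_\Omega q\cdot u\,dx$. Your cutoff $\ell(u)\Phi(|u|/M)$, by contrast, produces the commutator term $\ell(u)\Phi'(|u|/M)\frac{1}{M|u|}\sum_i u_if_i(u)$, in which the cancellation $\sum_i q_if_i=0$ is no longer available --- that is exactly what forces the unobtainable integrability. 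To repair your argument, replace your test function by this $\beta$-trick; the rest of your limiting analysis (the $\eta\to0$ and $M\to\infty$ passages for the diffusion terms) then goes through as in Lemma \ref{weak-eep}.
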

\begin{proof}
	 \textcolor{black}{Our proof follows from \cite[Proposition 6]{Fi17} where Fischer proved the conservation laws for reaction-diffusion systems.}
      We denote by $q = (q_1,\ldots,q_n)$ an arbitrary row of $\Q$. Thus, we have that
	$	\sum_{i=1}^n q_if_i(u) = 0. $
      Let $\phi_M$ be chosen in the same way as in the proof Lemma \ref{weak-eep}. By choosing $\xi$ as
		   $\color{black}{\xi(u) = \phi_M\left(\beta \sum_{i=1}^n q_i u_i + E(u+\eta|u_\infty)\right) }$
      where $\beta \in \mathbb{R}$ and $\psi=1$ in the definition of renormalized solutions, we can pass to the limits $\eta \to 0$ and $M \to +\infty$ like in the proof of Lemma \ref{weak-eep} to obtain
	  \begin{align*}
		\left(\beta \sum_{i=1}^n \int_{\Omega} q_i u_i \,dx + \E(u|u_\infty)\right)\,\biggr|_0^T \leq \int_0^T\D(u(\tau))\,d\tau. 
	  \end{align*}
  By dividing both sides by $\beta >0$ and letting $\beta \to +\infty$, we get that
	  \begin{align*}
		\sum_{i=1}^n \int_{\Omega} q_i u_i(T)\,dx \leq \sum_{i=1}^n \int_{\Omega}q_i u_{i0}(x)\,dx.  
	  \end{align*}
   Repeating the arguments with $\beta<0$ and letting $\beta \to -\infty$, we obtain that
		 $\sum_{i=1}^n \int_{\Omega} q_i u_i(T)\,dx \geq \sum_{i=1}^n \int_{\Omega}q_i u_{i0}(x)\,dx,$ 
   which finishes the proof of the conservation laws.

 \end{proof}

We are now ready to give the proof of the main result.
\begin{proof}[Proof of Theorem \ref{thm:main}]
	
	The existence of a global renormalized solution follows from \cite[Theorem 1]{CJ17} since under the complex balanced condition the reactions satisfy 
	\eqref{ineq.new}, which is (H4) in \cite{CJ17} with $\pi_i = 1$ and $\lambda_i = -\log u_{i\infty}$ for all $i=1,\ldots, n$.
	
	We now turn to the convergence to equilibrium. Since the system possesses no boundary equilibria, it follows from \cite[Theorem 1.1]{FT17a} that
	$	\D(u) \geq \lambda \E(u|u_\infty)$
	for all measurable nonnegative functions $u$ satisfying $\Q \overline{u} = \Q u_\infty$, where $\lambda>0$ is an explicit constant up to a finite dimensional inequality \textcolor{black}{(\cite[inequality (11)]{FT17a})}. Note that this inequality does not require any other higher regularity of $u$. Therefore, thanks to Lemma \ref{cons}, for any renormalized solution to \eqref{S} it holds
	\begin{equation*}
		\D(u(s)) \geq \lambda \E(u(s)|u_\infty) \quad \text{ for a.e. } \quad s>0.
	\end{equation*}
	Using this and Lemma \ref{weak-eep} it follows that
	\begin{equation*}
		\E(u(t)|u_\infty) + \lambda\int_s^t\E(u(\tau)|u_\infty)d\tau \leq \E(u(s)|u_\infty) \quad \text{ for a.e. } \quad t>s.
	\end{equation*}
	By Gronwall's inequality we get
	\begin{equation*}
		\E(u(t)|u_\infty) \leq e^{-\lambda t}\E(u_0|u_\infty),
	\end{equation*}
	and a Csisz\'ar-Kullback-Pinsker type inequality (see e.g. \cite[Lemma 2.2]{FT17a}) completes the proof of Theorem \ref{thm:main}.
\end{proof}

\par{\bf Acknowledgements:} Both authors would like to thank Prof. Ansgar J\"ungel for the fruitful discussions. The first author acknowledges partial support from the Austrian Science Fund (FWF), grants P27352 and P30000, while the second author is partially supported by the International Training Program IGDK 1754 and NAWI Graz.

\end{document}